 \newcommand{\bx}{\mathbf{x}} \newcommand{\by}{\mathbf{y}}
\newcommand{\bm}{\mathbf{m}}\newcommand{\bn}{\mathbf{n}}
\newcommand{\cA}{\mathcal{A}} \newcommand{\cB}{\mathcal{B}}
 \newcommand{\cF}{\mathcal{F}}
\newcommand{\vac}{|0\rangle}
\newtheorem{thm}{Theorem}[section]
\newtheorem{lem}[thm]{Lemma}
\newtheorem{prop}[thm]{Proposition}
\theoremstyle{definition}
\theoremstyle{remark}
\newtheorem{rem}{Remark}[section]
\newcommand{\half}{\frac{1}{2}}
\newcommand{\be}{\begin{equation}}
\newcommand{\ee}{\end{equation}}
\newcommand{\bea}{\begin{eqnarray}}
\newcommand{\eea}{\end{eqnarray}}
\newcommand{\ben}{\begin{eqnarray*}}
\newcommand{\een}{\end{eqnarray*}}
\newcommand{\bt}{\begin{split}}
\newcommand{\et}{\end{split}}
\newcommand{\bet}{\begin{equation}
\begin{split}}
\newcommand{\eet}{\end{split}
\end{equation}}
\DeclareMathOperator{\Span}{span}
\begin{document}

\title[Gromov-Witten invariants of the resolved Conifold]
{On fermionic representation of the Gromov-Witten invariants of the resolved conifold}
\date{}
\author{Fusheng Deng \and Jian Zhou}
\address{Fusheng Deng: \ School of Mathematical Sciences, Graduate University of Chinese Academy of Sciences\\ Beijing 100049, China}
\email{fshdeng@gucas.ac.cn}
\address{Jian Zhou: Department of Mathematical Sciences Tsinghua University\\Beijing, 100084, China}
\email{jzhou@math.tsinghua.edu.cn}

\begin{abstract}
We prove that the fermionic form of the generating function of the Gromov-Witten invariants of the resolved conifold is a
Bogoliubov transform of the fermionic vacuum; in particular, it is a tau function of the KP hierarchy. Our proof is based  on
the  gluing rule of the topological vertex and the formulas of the fermionic representations of the framed one-legged and two-legged
topological vertex which were conjectured by Aganagic et al and proved in our recent work.
\end{abstract}

\maketitle

\section{Introduction}

In general it is an unsolved problem to compute the Gromov-Witten invariants of an algebraic variety
in arbitrary genera.
However,
in the case of toric Calabi-Yau threefolds (which are noncompact),
string theorists have found an algorithm called the topological vertex \cite{AKMV}
to compute the generating function of both open and closed
Gromov-Witten invariants based on a remarkable duality with
 link invariants in the Chern-Simons theory approach of Witten \cite{Wit1, Wit2}.
A mathematical theory of the topological vertex  has been developed in  \cite{LLLZ}.

The topological vertex, which is the generating function of the Gromov-Witten invariants of $\mathbb{C}^3$ with three
special $D$-branes,
is a mysterious combinatorial object that asks for further studies.
On the $A$-theory side,
the topological vertex can be
realized as a state in the threefold tensor product of the space $\Lambda$ of symmetric functions.
In this representation its expressions given
by physicists \cite{AKMV} or by mathematicians \cite{LLLZ} are both very complicated.
It is very interesting to understand the topological vertex
from other perspectives.
In \cite{ORV},
the topological vertex is related to a combinatorial problem
of plane partitions.
In \cite{AKMV} it was suggested that the topological vertex is a Bougoliubov
transform via the boson-fermion correspondence.
This point of view was further elaborated in \cite{ADKMV} and extended
to the partition functions of toric Calabi-Yau threefolds.
Indeed, by the local mirror symmetry \cite{HV, HIV},
on the B-model side,
one studies quantum Kodaira-Spencer theory of the local mirror curve.
By physical derivations,
the corresponding state is constrained by the Ward identities,
giving the $W_\infty$ constraints.
(See also \cite{Gukov-Sulkowski} where the partition functions
are expected to be annihilated by certain quantum operators
obtaining by quantized the local mirror curves.)
In this formalism it is natural to use the fermonic picture,
and a simple looking formula for the fermionic form of the topological vertex
under the boson-fermion correspondence was conjectured
in \cite{ADKMV}.
The ADKMV conjecture is directly related to integrable hierarchies:
The one-legged case is related to the KP hierarchy, the two-legged case
to the 2-dimensional Toda hierarchy, and the three-legged case to the 3-component KP hierarchy (see Remark \ref{rem:Bogoliubov-KP}).
The one-legged and the two-legged cases can also be seen directly from the bosonic picture \cite{Zh1},
but the three-legged case can only be seen through the fermionic picture.

The topological vertex can be used to compute Gromov-Witten invariants
of toric Calabi-Yau $3$-folds by certain gluing rules.
There is a standard inner product on the space $\Lambda$,
and the gluing rule is essentially taking inner product over the components corresponding to
the branes of gluing (see \S\S \ref{subsec:GV of Resolved Boson} for exact formulation).
So the resulted generating functions are states in multifold tensor products of the space $\Lambda$.
In general, they
have very complicated combinatorial structures.

In our recent work \cite{DZ}, we proposed a generalization of the  ADKMV conjecture to the framed topological vertex which we refer to as
the framed ADKMV conjecture.
Note that it is important to consider framing
when we consider gluing of the topological vertex. We gave a proof in \cite{DZ} of the framed ADKMV conjecture in the one-legged case and the two-legged case, and derived
a determinantal formula for the framed topological vertex in the three-legged case based on the Framed ADKMV Conjecture.
It remains open to give a proof of this conjecture for the full three-legged topological vertex.

Provided that the framed ADKMV conjecture holds,
then a  natural question is whether or not the generating functions of the Gromov-Witten invariants of general toric Calabi-Yau threefolds are  Bogoliubov transforms
in the fermionic picture.
It was also conjectured in \cite{ADKMV} that it is indeed the case.
However, it seems very difficult to prove this conjecture directly by boson-fermion correspondence and standard Schur calculus,
even for the very simple case of the resolved conifold with a single brane.
In \cite{Sulkowski}
the closed string partition function of the resolved conifold
is related to Hall-Littlewood functions
and a fermionic represenation is obtained by
the deformed boson-fermion correspondence.
Based on the method in \cite{ADKMV},
it was shown in \cite{Kashani-Poor} that the B-model amplitude  of the mirror space
of the one-legged resolved conifold is a Bogoliubov transform.
It also seems difficult to generalize the method in \cite{ADKMV}
and \cite{Kashani-Poor} to prove this conjecture in general.

In this paper we will tackle this problem using a different strategy.
We will  start from the framed ADKMV conjecture,
 and then consider the gluing rule of the topological vertex
 as presented in \cite{AKMV}\cite{LLLZ} in the fermionic picture.
In this work we will focus on the framed one-legged resolved conifold.
 But the method here can be easily modified to the cases of the total spaces of
 vector bundles $O(p)\oplus O(-2-p)\rightarrow \mathbb{P}^1$, $p\in\mathbb{Z}$.
 The treatment for general toric Calabi-Yau threefolds
will be presented in a separate paper \cite{DZ'}.
The main result of the present paper is that the generating function of the
 Gromov-Witten invariants of the resolved conifold with one brane and arbitrary framing
 is a Bogoliubov transform of the fermionic vacuum;
 in particular, it is a tau function of the KP hierarchy.

The rest of the paper is arranged as follows. After reviewing some preliminaries and fixing notations in \S 2, we rewrite in \S 3 the generating function of Gromov-Witten
invariants of the framed one-legged resolved conifold as an gluing (see that section  for precise meaning) of two fermionic states which are Bogoliubov transforms, based on the fermionic representation of the framed one-legged and two-legged topological vertex. In the final \S 4, we prove that the gluing of an arbitrary two-component Bogoliubov transform and an arbitrary one-component Bogoliubov transform is also a Bogoliubov transform. The result in \S 4, combing with \S 3, leads directly to the result that the fermionic representation of the generating function considered is a Bogoliubov transform.

\vspace{.1in}
{\em Acknowledgements}.
The work was partially done during the first author's attending 
the mathematical seminars supported by Morningside Center of CAS.
The first author is partially supported by NSFC grants
(11001148 and 10901152) and the President Fund of GUCAS. The second author is partially supported by two NSFC grants (10425101 and 10631050)
and a 973 project grant NKBRPC (2006cB805905).

\section{Preliminaries}\label{sec:prelimilaries}

In this section, we recall briefly some well-known concepts and results that will be used in following sections.

\subsection{Partitions}
A partition $\mu$ of a positive integral number $n$ is a decreasing finite sequence of integers $\mu_1\geq\cdots \geq\mu_l>0$,
such that $|\mu| = \mu_1 + \cdots + \mu_l = n$.
The following number associated to $\mu$ will be useful in this paper:
\be
 \kappa_\mu = \sum_{i=1}^l \mu_i(\mu_i - 2i + 1).
\ee
It is very useful to graphically represent a partition by its Young diagram.
This leads to many natural definitions.
First of all,
by transposing the Young diagram one can define the conjugate $\mu^t$ of $\mu$.
Secondly
assume the Young diagram of $\mu$ has $k$ boxes in the diagonal.
Define $m_i = \mu_i - i$ and $n_i = \mu^t_i - i$ for $i = 1, \cdots , k$,
then it is clear that $m_1> \cdots > m_k \geq 0$ and  $n_1> \cdots > n_k \geq 0$.
The partition $\mu$ is completely determined by the numbers $m_i , n_i$.
We often denote the partition $\mu$ by $(m_1, \dots , m_k | n_1, \dots , n_k)$,
this is called the Frobenius notation.
A partition of the form $(m|n)$ in Frobenius form is called a hook partition.

For a box $e$ at the position $(i , j)$ in the Young diagram of $\mu$,
define its content by $c(e) = j-i$.
Then it is easy to see that
\be \label{eqn:Kappa}
\kappa_\mu = 2\sum_{e\in \mu}c(e).
\ee
Indeed,
\ben
&& \sum_{e\in \mu}c(e) = \sum_{i=1}^l \sum_{j=1}^{\mu_i} (j-i)
= \sum_{i=1}^n (\half \mu_i(\mu_i+1) - i \mu_i) = \half \kappa_\mu.
\een
A straightforward application of \eqref{eqn:Kappa} is the following:

\begin{lem}[e.g see \cite{DZ}] \label{lm:kappa}
Let $\mu = (m_1, m_2, \dots, m_k | n_1, n_2, \dots, n_k)$ be a partition written in the Frobenius notation. Then we have
\be
\kappa_\mu = \sum_{i=1}^k m_i(m_i+1) - \sum_{i=1}^k n_i(n_i+1).
\ee
In particular,
\be
\kappa_{(m_1, m_2, \dots, m_k | n_1, n_2, \dots, n_k)}
= \sum_{i=1}^k \kappa_{(m_i|n_i)}.
\ee
\end{lem}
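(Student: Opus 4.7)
The plan is to reduce the statement to a hook-by-hook computation via the content formula \eqref{eqn:Kappa} established just above. The central observation is that the Young diagram of $\mu$ decomposes as a disjoint union of $k$ principal hooks, one for each diagonal box $(i,i)$: the $i$-th hook $H_i$ consists of $(i,i)$ together with its arm (the $m_i$ boxes of row $i$ lying strictly to the right of $(i,i)$) and its leg (the $n_i$ boxes of column $i$ lying strictly below $(i,i)$). This decomposition is exactly what the Frobenius coordinates $m_i = \mu_i - i$ and $n_i = \mu_i^t - i$ record.

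Given this decomposition, I would compute the sum of contents inside a single hook: the diagonal box contributes $0$, the arm contributes $1+2+\dots+m_i = m_i(m_i+1)/2$, and the leg contributes $-1-2-\dots-n_i = -n_i(n_i+1)/2$. Summing over all boxes of $\mu$ by first summing within each hook and then over $i$, and finally doubling as in \eqref{eqn:Kappa}, yields
\[
\kappa_\mu \;=\; \sum_{i=1}^k \bigl(m_i(m_i+1) - n_i(n_i+1)\bigr),
\]
which is the first identity.

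The second (``in particular'') identity is then immediate: apply the first formula with $k=1$ to the hook partition $(m_i \mid n_i)$ to obtain $\kappa_{(m_i|n_i)} = m_i(m_i+1) - n_i(n_i+1)$, and sum over $i$. There is no substantive obstacle in this argument; the only step to check with care is that the principal-hook decomposition really is a disjoint cover of the Young diagram, which is precisely the geometric content of the Frobenius parametrization.
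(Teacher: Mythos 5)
Your proof is correct and follows exactly the route the paper intends: the paper gives no explicit proof but introduces the lemma as ``a straightforward application of \eqref{eqn:Kappa}'', i.e.\ of the content formula $\kappa_\mu = 2\sum_{e\in\mu}c(e)$, and your principal-hook decomposition with contents $0, 1,\dots,m_i$ on the arm and $-1,\dots,-n_i$ on the leg is the natural way to carry that application out. Nothing is missing.
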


\subsection{Schur functions and skew Schur functions}\label{subsec:Schur & skew schur}
Let $\Lambda$ be the space of symmetric functions in $\bx = (x_1, x_2, \dots)$.
The inner product on the space $\Lambda$ is defined by setting the set of Schur functions as an orthonormal basis.
For a partition $\mu$, let $s_\mu:=s_\mu(\bx)$ be the corresponding Schur function in $\Lambda$. Given to partitions $\mu$ and $\nu$,
 the skew Schur functions $s_{\mu/\nu}$ is defined by the condition \cite{Macdonald}
$$(s_{\mu/\nu} , s_\lambda) = (s_\mu , s_\nu s_\lambda)$$
for all partitions $\lambda$. This is equivalent to define
$$s_{\mu/\nu} = \sum_{\lambda}c_{\nu\lambda}^\mu s_\lambda,$$
where the constants $c_{\nu\lambda}^\mu$ are the structure constants
(called the Littlewood-Richardson coefficients) defined by
\be
s_\nu s_\lambda = \sum_{\gamma}c_{\nu\lambda}^\gamma s_\gamma.
\ee\\

We often meet some specialization of symmetric functions.
Let $q^{\rho}:=(q^{-1/2}, q^{-3/2}, \dots)$.
It is easy to see that
\be
p_n(q^\rho) = \frac{1}{q^{n/2} - q^{-n/2}} = \frac{1}{[n]},
\ee
where $[n] = q^{n/2} - q^{-n/2}.$ A very interesting fact is that with this specialization the Schur functions
also have very simple expressions.

\begin{prop} \label{prop:SchurSpec} \cite{Zh4}   For any partition $\mu$, one has
$$s_\mu(q^\rho) = q^{\kappa_\mu/4}\frac{1}{\prod_{e\in \mu}[h(e)]},$$
where $h(e)$ is the hook number of $e$ and $[n] = q^{n/2} - q^{-n/2}.$
\end{prop}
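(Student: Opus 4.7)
The plan is to reduce Proposition \ref{prop:SchurSpec} to the classical principal specialization of Schur functions, namely
$$s_\mu(1,t,t^2,\dots)=t^{n(\mu)}\prod_{e\in\mu}\frac{1}{1-t^{h(e)}},\qquad n(\mu):=\sum_i(i-1)\mu_i,$$
which is obtained (see Macdonald, Chapter~I) by letting $N\to\infty$ in the finite principal specialization $s_\mu(1,t,\dots,t^{N-1})=t^{n(\mu)}\prod_e\tfrac{1-t^{N+c(e)}}{1-t^{h(e)}}$. Since $q^{\rho}=q^{-1/2}\cdot(1,q^{-1},q^{-2},\dots)$ and $s_\mu$ is homogeneous of degree $|\mu|$, I would first invoke homogeneity to write $s_\mu(q^\rho)=q^{-|\mu|/2}\,s_\mu(1,q^{-1},q^{-2},\dots)$ and then apply the principal specialization at $t=q^{-1}$.

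The next step is to rewrite each hook factor using the elementary identity $1-q^{-h}=q^{-h/2}[h]$, which turns the product into $\prod_{e}q^{h(e)/2}/[h(e)]$ and yields
$$s_\mu(q^\rho)=q^{-|\mu|/2-n(\mu)+\frac{1}{2}\sum_{e\in\mu}h(e)}\prod_{e\in\mu}\frac{1}{[h(e)]}.$$
It then remains to show that the exponent of $q$ equals $\kappa_\mu/4$. For this I would use two standard partition identities. Summing arm plus leg plus one over the diagram gives $\sum_{e\in\mu}h(e)=\tfrac{1}{2}\bigl(\sum_i\mu_i^2+\sum_j(\mu_j^t)^2\bigr)$, and the column-by-column count $n(\mu)=\sum_j\binom{\mu_j^t}{2}$ then produces $\sum_e h(e)=n(\mu)+n(\mu^t)+|\mu|$. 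Substituting collapses the exponent to $\tfrac{1}{2}(n(\mu^t)-n(\mu))$. The content formula \eqref{eqn:Kappa} gives $\kappa_\mu=2\sum_e(j-i)=2(n(\mu^t)-n(\mu))$, since $\sum_e i=n(\mu)+|\mu|$ and $\sum_e j=n(\mu^t)+|\mu|$. This matches the exponent up to the factor $1/4$, completing the proof.

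The argument is essentially bookkeeping once the principal specialization theorem is granted, so I do not anticipate any conceptual difficulty. The main trap is keeping track of half-integer powers of $q$ and the signs that appear when converting between $1-q^{\pm h}$ and the symmetric bracket $[h]$; in particular, the factor $q^{-|\mu|/2}$ coming from the homogeneity rescaling must be carefully combined with the $q^{h/2}$ factors arising from each hook in order to recover the clean exponent $\kappa_\mu/4$.
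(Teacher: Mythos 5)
Your proof is correct and complete: the reduction via homogeneity to the principal specialization $s_\mu(1,t,t^2,\dots)=t^{n(\mu)}\prod_e(1-t^{h(e)})^{-1}$ at $t=q^{-1}$, the conversion $1-q^{-h}=q^{-h/2}[h]$, and the identities $\sum_e h(e)=n(\mu)+n(\mu^t)+|\mu|$ and $\kappa_\mu=2(n(\mu^t)-n(\mu))$ all check out and collapse the exponent to $\kappa_\mu/4$ exactly as claimed. The paper itself gives no proof of this proposition, only a citation to \cite{Zh4}, and your argument is the standard derivation one would expect to find there.
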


\subsection{Fermionic Fock space }
We say a set of integers $A = \{a_1, a_2, \dots \}\subset \mathbb{Z}+\frac{1}{2}$, $a_1>a_2> \cdots$, is admissible if it satisfies the following two conditions:
\begin{itemize}
\item[1.] $\mathbb{Z}_- + \frac{1}{2}\backslash A$ is finite and
\item[2.] $A\backslash \mathbb{Z}_- + \frac{1}{2}$ is finite,
\end{itemize}
where  $\mathbb{Z}_-$ is the set of negative integers.

Consider the linear space $W$ spanned by a basis $\{\underline{a}| a\in \mathbb{Z}+\frac{1}{2}\}$,
indexed by half-integers.
For an admissible set $A = \{a_1, a_2, \dots\}$,
we associate an element $\underline{A}\in \wedge^\infty W$ as follows:
$$\underline{A} = \underline{a_1}\wedge \underline{a_2} \wedge \cdots.$$
Then the free fermionic Fock space $\mathcal{F}$ is defined as
$$\cF = \Span \{\underline{A}: \; A\subset \mathbb{Z}+\frac{1}{2}\; \text{is admissible} \}.$$
One can define an inner product on $\mathcal{F}$ by taking
$\{\underline{A}:\; A\subset \mathbb{Z}+\frac{1}{2}\; \text{is admissible} \}$ as an orthonormal basis.

For $\underline{A} = \underline{a_1}\wedge \underline{a_2} \wedge \cdots
\in \mathcal{F}$,
define its charge as:
$$|A\backslash \mathbb{Z}_- + \frac{1}{2}| - |\mathbb{Z}_- + \frac{1}{2}\backslash A|.$$
Denote by $\cF^{(n)} \subset \mathcal{F}$ the subspace spanned by $\underline{A}$ of charge $n$,
then there is a decomposition
$$\mathcal{F} = \bigoplus_{n\in \mathbb{Z}} \cF^{(n)}.$$
An operator on $\mathcal{F}$ is called charge 0 if it preserves the above decomposition.

The charge 0 subspace $F^{(0)}$ has a basis indexed by partitions:
\be
|\mu\rangle:= \underline{\mu_1 - \frac{1}{2}} \wedge \underline{\mu_2 - \frac{3}{2}}\wedge \cdots \wedge
 \underline{\mu_l -\frac{2l-1}{2}}\wedge \underline{-\frac{2l+1}{2}}\wedge \cdots
\ee
where $\mu = (\mu_1, \cdots , \mu_l)$,
i.e.,
$|\mu\rangle = \underline{A_\mu}$, where $A_\mu =(\mu_i - i + \half)_{i=1, 2, \dots}$.
If $\mu = (m_1, \cdots , m_k | n_1, \cdots , n_k)$ in Frobenius notation, then
\begin{equation}
|\mu\rangle = \underline{m_1+\frac{1}{2}}\wedge \cdots \wedge \underline{m_k+\frac{1}{2}}\wedge \underline{-\frac{1}{2}}
\wedge \underline{-\frac{3}{2}} \wedge \cdots \wedge \widehat{\underline{-n_k-\frac{1}{2}}} \wedge \cdots \wedge
\widehat{\underline{-n_1-\frac{1}{2}}} \wedge \cdots .
\end{equation}
In particular,
when $\mu$ is the empty partition,
we get:
$$|0\rangle := \underline{-\frac{1}{2}}\wedge \underline{-\frac{3}{2}}\wedge \cdots \in \mathcal{F}.$$
It will be called the fermionic vacuum vector.

We now recall the creators and annihilators on $\mathcal{F}$.
For $r \in \mathbb{Z}+\frac{1}{2}$,
define operators $\psi_r$ and $\psi^*_r$ by
\begin{eqnarray*}
&\psi_r (\underline{A}) =
\begin{cases}
(-1)^{k}\underline{a_1}\wedge\cdots\wedge\underline{a_k}\wedge \underline{r}\wedge\underline{a_{k+1}}\wedge\cdots, & \text{if $a_k > r > a_{k+1}$ for some $k$}, \\
0, &  \text{otherwise};
\end{cases}\\
&\psi^*_r(\underline{A}) =
\begin{cases}
(-1)^{k+1}\underline{a_1}\wedge\cdots\wedge \widehat{\underline{a_k}}\wedge\cdots, & \text{if $a_k = r$ for some $k$}, \\
0, &  \text{otherwise}.
\end{cases}
\end{eqnarray*}
Under the inner product defined above, for $r \in \mathbb{Z}+1/2$, it is clear that $\psi_r $ and $\psi^*_r$ are adjoint operators.
The anti-commutation relations for these operators are
\begin{equation} \label{eqn:CR}
[\psi_r,\psi^*_s]_+:= \psi_r\psi^*_s + \psi^*_s\psi_r = \delta_{r,s}id
\end{equation}
and other anti-commutation relations are zero.
It is clear that for $r > 0$,
\begin{align}
\psi_{-r} \vac & = 0, & \psi_r^* \vac & = 0,
\end{align}
so the operators $\{\psi_{-r}, \psi_r^*\}_{r > 0}$ are called the fermionic annihilators.
For a partition $\mu = (m_1, m_2, . . ., m_k | n_1, n_2, . . ., n_k)$, it is clear that
\be\label{eq:operator rep for mu}
|\mu\rangle = (-1)^{n_1 + n_2 + . . . + n_k}\prod_{i=1}^k \psi_{m_i+\frac{1}{2}} \psi_{-n_i-\frac{1}{2}}^*|0\rangle .
\ee
So the operators $\{\psi_{r}, \psi_{-r}^*\}_{r > 0}$ are called the fermionic creators.
The normally ordered product is defined as
\begin{equation*}
:\psi_r\psi^*_r: =
\begin{cases}
 \psi_r\psi^*_r, & r>0, \\
- \psi^*_r\psi_r, & r<0.
\end{cases}
\end{equation*}
In other words,
an annihilator is always put on the right of a creator.

\subsection{Boson-fermion correspondence}
For any integer $n$, define an operator $\alpha_n$ on the fermionic Fock space $\mathcal{F}$ as follows:
\begin{equation*}
\alpha_n = \sum_{r\in \mathbb{Z} + \frac{1}{2}}:\psi_r\psi^*_{r+n}:
\end{equation*}
Let
$\mathcal{B} = \Lambda[z , z^{-1}]$
be the bosonic Fock space, where $z$ is a formal variable.
Then the  boson-fermion correspondence is a linear isomorphism
$\Phi: \mathcal{F} \rightarrow \mathcal{B}$ given by
\begin{equation}
u\mapsto z^m \langle\underline{0}_m | e^{\sum_{n=1}^\infty \frac{p_n}{n}\alpha_n}u\rangle ,\ \ u\in \cF^{(m)}
\end{equation}
where $|\underline{0}_m\rangle = \underline{-\frac{1}{2}+m}\wedge \underline{-\frac{3}{2}+m}\wedge\cdots$.
It is clear that $\Phi$ induces an isomorphism between $\cF^{(0)}$ and $\Lambda$. Explicitly, this isomorphism is given by
\begin{equation}\label{boson-fermion}
|\mu\rangle \longleftrightarrow s_\mu.
\end{equation}

The boson-fermionic correspondence plays an important role in Kyoto school's theory
of integrable hierarchies.
For example,

\begin{prop}\label{bilinear relation tau fermion}
If $\tau\in \Lambda$ corresponds to $|v\rangle\in F^{(0)}$, then $\tau$ is a $tau$-function of the KP
hierarchy in the Miwa variable $t_n = \frac{p_n}{n}$ if and only if $|v\rangle$ satisfies the bilinear relation
\begin{equation}\label{bilinear relation tau fermion 1}
\sum_{r\in \mathbb{Z} + \frac{1}{2}}\psi_r |v\rangle\otimes \psi^*_r |v\rangle = 0.
\end{equation}
\end{prop}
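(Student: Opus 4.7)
The plan is to use the boson--fermion dictionary of the Kyoto school to translate the bilinear relation \eqref{bilinear relation tau fermion 1} into the Hirota bilinear form of the KP hierarchy. First I would package the mode operators into fermionic fields $\psi(z)$ and $\psi^*(z)$ (formal Laurent series in $z$ with half-integer exponents), chosen so that
\be
\sum_{r\in\mathbb{Z}+\half} \psi_r|v\rangle\otimes\psi_r^*|v\rangle \;=\; \mathrm{Res}_{z}\,\psi(z)|v\rangle\otimes\psi^*(z)|v\rangle\, dz.
\ee
With this reformulation, the identity \eqref{bilinear relation tau fermion 1} becomes a single residue condition on the two-point function of the fermionic fields evaluated on $|v\rangle\otimes|v\rangle$.

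Next, via the isomorphism $\Phi:\cF\to\cB$, these fields become vertex operators on the bosonic side. From the commutators $[\alpha_n,\psi(z)]=z^n\psi(z)$, $[\alpha_n,\psi^*(z)]=-z^n\psi^*(z)$, together with the charge shifts $\psi_r:\cF^{(0)}\to\cF^{(1)}$ and $\psi_r^*:\cF^{(0)}\to\cF^{(-1)}$ (the latter producing the prefactors from the $z^m$ in the definition of $\Phi$), I would derive the standard vertex-operator formulas
\be
\Phi(\psi(z)|v\rangle) = z\,e^{\xi(t,z)}\,\tau(t-[z^{-1}]),\qquad
\Phi(\psi^*(z)|v\rangle) = z^{-1}\,e^{-\xi(t,z)}\,\tau(t+[z^{-1}]),
\ee
where $t_n = p_n/n$, $\xi(t,z)=\sum_{n\ge 1}t_n z^n$, and $[z^{-1}]=(z^{-1},\tfrac{1}{2}z^{-2},\tfrac{1}{3}z^{-3},\ldots)$ is the Miwa shift.

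Substituting these identities into the residue condition, taken in two independent copies of the Miwa variables $t$ and $t'$ for the two tensor factors, I would arrive at the classical Hirota bilinear identity
\be
\mathrm{Res}_{z}\,\tau(t-[z^{-1}])\,\tau(t'+[z^{-1}])\,e^{\xi(t-t',z)}\,dz = 0,
\ee
which is precisely the defining condition for $\tau$ to be a tau function of the KP hierarchy. Since every step in the chain is an equivalence, both directions of the proposition follow simultaneously.

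The main technical obstacle is the careful derivation of the vertex-operator expressions for $\Phi\psi(z)\Phi^{-1}$ and $\Phi\psi^*(z)\Phi^{-1}$, and in particular the correct tracking of the $z^{\pm 1}$ prefactors originating from the charge shift built into $\Phi$ via the factor $z^m$. Once these dictionary identities are established, the remainder of the argument is a purely formal manipulation, and the statement is a classical result of Date--Jimbo--Kashiwara--Miwa whose verification amounts to matching coefficients.
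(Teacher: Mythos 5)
The paper states this proposition without proof, treating it as a classical result of the Kyoto school (Date--Jimbo--Kashiwara--Miwa); your proposal reconstructs exactly that standard argument (packaging the modes into fermion fields, rewriting the bilinear condition as a residue, passing to vertex operators via the boson--fermion correspondence, and arriving at the Hirota bilinear identity), and it is correct in outline. The only step left implicit is the verification of the vertex-operator dictionary with the correct $z^{\pm1}$ prefactors coming from the charge shift, which you rightly flag as the technical crux and which is a routine convention-matching computation.
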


\begin{rem}\label{rem:Bogoliubov-KP}
A state $|v\rangle\in \cF^{(0)}$ satisfies the bilinear relation
\eqref{bilinear relation tau fermion 1} if and only if it lies in the orbit $\widehat{GL_\infty}|0\rangle$.
This is equivalent to say that $|v\rangle$ can be represented as
$$|v\rangle = \exp(\sum_{r,s\in \mathbb{Z}+1/2}M_{rs}:\psi_{r}\psi_{s}^*:)|0\rangle$$
for some coefficients $M_{rs}$.
There is also a multi-component generalization of the boson-fermion correspondence
which can be used to study multi-component KP hierarchies \cite{KL}.
\end{rem}

\section{Gromov-Witten invariants of the framed one-legged resolved conifold and its fermionic form}
\subsection{Generating function of Gromov-Witten invariants of the framed one-legged resolved conifold}\label{subsec:GV of Resolved Boson}
By the theory of topological vertex \cite{AKMV}\cite{LLLZ}, the Gromov-Witten invariants of any toric Calabi-Yau threefold
can be computed from the topological vertex by certain explicit gluing process. As a special case, the Gromov-Witten invariants of
the framed one-legged resolved conifold can be computed by gluing the framed one-legged topological vertex and the framed two-legged topological vertex.

The one-legged topological vertex with framing $a$, in terms of Schur functions, is given by
\be
Z_1^{(a)}(\by) =\sum_{\mu} q^{a\kappa_\mu/2}s_\mu(q^\rho)s_\mu(\by)
\ee
and the two-legged topological vertex  with framings $(a_1 , a_2)$, in terms of skew Schur functions, is given by\cite{Zh1} \cite{Zh4}
\be
Z^{(a_1, a_2)}_2(\bx; \by)
= \sum_{\mu, \nu}\left(q^{\frac{(a_1+1)\kappa_\mu+a_2\kappa_\nu}{2}}
\sum_{\eta}s_{\mu^t/\eta}(q^\rho)s_{\nu/\eta}(q^\rho)\right )s_\mu(\bx) s_\nu(\by),
\ee
where $q=e^{-g_s}$ and $g_s$ the coupling constant , $\bx=(x_1,x_2,\cdots)$ and $\by=(y_1, y_2, \cdots)$, and the partitions $\mu$, $\nu$ encodes boundary conditions of the holomorphic curves we considered in $\mathbb{C}^3$.

By the theory of the topological vertex,  the generating function  of the Gromov-Witten
 invariants of the  resolved conifold with one brane of framing $a$ is given by
\be\label{eq: boson gluing}
\tilde{Z^a}(\bx) = \sum_{\mu}\left(\sum_{\nu}C^2_{\mu\nu}(a)Q^{|\nu|}C^1_{\nu^t}\right)s_\mu(\bx)
\ee
where $Q = - e^{-t}$ and $t$ is the K\"{a}hler parameter of the $\mathbb{P}^1$  in the resolved conifold , and
\be
\begin{split}
&C^2_{\mu\nu}(a) = q^{\frac{(a+1)\kappa_\mu}{2}}\sum_{\eta}s_{\mu^t/\eta}(q^\rho)s_{\nu/\eta}(q^\rho),\\
&C^1_\mu = s_\mu(q^\rho).
\end{split}
\ee

Let $Z_0 = \sum_{\mu}s_\mu(q^\rho)Q^{|\mu|}s_{\mu^t}(q^\rho)$, it is the generating function of the closed Gromov-Witten invariants of the resolved conifold. We are interested in  the normalized generating function $Z^a(\bx) = \tilde{Z^a}(\bx)/Z_0$, which is the generating function of the open Gromov-Witten invariants of the famed one-legged resolved conifold.

From now on, we will view $Q$ as a formal variable, $Z_0$ as a formal power series of $Q$, and $\tilde{Z}^a(\bx)$ and $Z^a(\bx)$ as formal power series of $Q$ with coefficients in the space of symmetric functions (with parameter $q$).

\subsection{Fermionic representation of the generating function}
According to the boson-fermion correspondence \eqref{boson-fermion}, the element $V$ in the fermionic Fock space $\mathcal{F}$
corresponding to the normalized generating function $Z^a(\bx)$ defined in the previous subsection is
\be
V = \sum_{\mu}\left(\sum_{\nu}C^2_{\mu\nu}(a)Q^{|\nu|}C^1_{\nu^t}/Z_0\right)|\mu\rangle
\ee\\

For simplicity, for an integer $m > 0$, we denote $m+1/2$ by $\bm$ and $-m-1/2$ by $-\bm$. The main aim  of the present paper is to prove the following
\begin{thm}\label{thm:fernion rep}
The element $V$ defined as above is a Bogoliubov transform of the vacuum in $\mathcal{F}$. In other word,
for $m , n \geq 0$, there exist certain coefficients $R_{mn}$ as formal power series of $Q$, such that
\be\label{eq:fermion rep}
V = \exp(\sum_{m , n \geq 0} R_{mn}\psi_\bm\psi^{*}_{-\bn})|0\rangle
\ee
where $|0\rangle$ is the vacuum vector in $\mathcal{F}$. In particular, $Z^a(\bx)$ is a tau function of the KP hierarchy in the Miwa variables $t_n=\frac{p_n(\bx)}{n}$.
\end{thm}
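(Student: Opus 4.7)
The plan is to combine three ingredients: the framed ADKMV conjecture for the one- and two-legged vertices (proved in \cite{DZ}), a fermionic reformulation of the bosonic gluing rule \eqref{eq: boson gluing}, and a general algebraic lemma about gluing Bogoliubov transforms.

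First, I would translate the bosonic gluing \eqref{eq: boson gluing} into the fermionic picture using the boson-fermion correspondence \eqref{boson-fermion}. The two-legged vertex produces a state
\[
|W_2\rangle \;=\; \sum_{\mu,\nu} C^2_{\mu\nu}(a)\,|\mu\rangle\otimes|\nu\rangle \;\in\; \mathcal{F}\otimes\mathcal{F},
\]
while the transposed, $Q$-weighted one-legged contribution gives
\[
|W_1\rangle \;=\; \sum_\nu Q^{|\nu|}\, s_{\nu^t}(q^\rho)\,|\nu\rangle \;\in\; \mathcal{F}.
\]
By the framed ADKMV conjecture proved in \cite{DZ}, $|W_2\rangle$ is a two-component Bogoliubov transform of $|0\rangle\otimes|0\rangle$, and (after absorbing $Q^{|\nu|}$ into the quadratic exponent by conjugation with $Q^{L_0}$, and applying the charge-zero element of $\widehat{GL_\infty}$ implementing the involution $\nu\mapsto\nu^t$) the state $|W_1\rangle$ is a one-component Bogoliubov transform. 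The gluing \eqref{eq: boson gluing} then becomes the fermionic identity
\[
V \;=\; \frac{1}{Z_0}\,(\mathrm{id}\otimes\langle W_1|)\,|W_2\rangle,
\]
where the partial inner product is taken over the second tensor factor.

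Second, I would prove the key algebraic lemma announced in \S 4 of the paper: if $|W_2\rangle\in\mathcal{F}\otimes\mathcal{F}$ is any two-component Bogoliubov transform of $|0\rangle\otimes|0\rangle$ and $|W_1\rangle\in\mathcal{F}$ is any one-component Bogoliubov transform of $|0\rangle$, then $(\mathrm{id}\otimes\langle W_1|)|W_2\rangle$, divided by the scalar $(\langle 0|\otimes\langle W_1|)|W_2\rangle$, is itself a Bogoliubov transform on the surviving factor. The strategy is to write each state as an exponential of a fermion bilinear, use the anti-commutation relations \eqref{eqn:CR} to push all annihilators to the right, and evaluate the partial pairing by fermionic Wick contractions; a chain of contractions between a fixed pair of mode labels produces a geometric series whose resummation yields an \emph{inverse-matrix} factor. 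Together with the surviving bilinears this can be assembled into a single exponential of the form \eqref{eq:fermion rep}, which will simultaneously give an explicit formula for the coefficients $R_{mn}$ in terms of the data defining $|W_1\rangle$ and $|W_2\rangle$.

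Combining the two steps immediately yields that $V$ is a Bogoliubov transform, and by Remark \ref{rem:Bogoliubov-KP} together with Proposition \ref{bilinear relation tau fermion} one concludes that $Z^a(\mathbf{x})$ is a tau function of the KP hierarchy in the Miwa variables $t_n = p_n(\mathbf{x})/n$. The main technical obstacle is the second step: carrying out the resummation that turns a partial fermionic contraction of Bogoliubov transforms into a single Bogoliubov transform, while controlling both the $Q$-adic convergence (so that everything is a well-defined formal power series in $Q$, as emphasized at the end of \S 3.1) and the infinite sums over fermionic modes, so that the resulting ``inverse-matrix'' expression is meaningful.
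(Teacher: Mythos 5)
Your proposal follows essentially the same route as the paper: rewrite the gluing \eqref{eq: boson gluing} as the partial inner product $\tilde{V}=(V_2^{(a,0)},V'_1)$ over the second factor, observe via the results of \cite{DZ} that both factors are Bogoliubov transforms (the paper checks this for $V'_1=\sum_\nu Q^{|\nu|}s_{\nu^t}(q^\rho)|\nu\rangle$ by an explicit determinant computation, equivalent to your $Q^{L_0}$-conjugation plus transpose-involution argument), and then invoke a general theorem that the normalized gluing of a two-component and a one-component Bogoliubov transform is again a Bogoliubov transform. The paper carries out your ``resummation'' step as an iteration of commutator identities $e^Ae^B=e^{[A,B]}e^Be^A$ whose successive corrections are divisible by increasing powers of $Q$, which is exactly the $Q$-adic control you flag as the main technical obstacle.
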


It seems very difficult to prove Theorem \ref{thm:fernion rep} by standard Schur calculus. The starting point of the proof here is the formulas
of the fermionic representation of the framed one-legged and two-legged topological vertex which were conjectured in \cite{ADKMV} and proved in
our recent work \cite{DZ}.

Under the boson-fermion correspondence \eqref{boson-fermion}, the element $V^{(a)}_1 \in \mathcal{F}$ corresponding to the framed one-legged
topological vertex $Z_1^a(\by)$ is
\be
V^{(a)}_1 = \sum_{\mu} q^{a\kappa_\mu/2}s_\mu(q^\rho)|\mu\rangle
\ee
and the element $V_2^{(a_1 , a_2)} \in \mathcal{F}_1\otimes \mathcal{F}_2 $ corresponding to the framed two-legged topological vertex
$Z^{(a_1, a_2)}_2(\bx; \by)$ is
\be
V^{(a_1 , a_2)}_2 = \sum_{\mu, \nu}\left(q^{\frac{(a_1+1)\kappa_\mu+a_2\kappa_\nu}{2}}
\sum_{\eta}s_{\mu^t/\eta}(q^\rho)s_{\nu/\eta}(q^\rho)\right )|\mu\rangle\otimes|\nu\rangle
\ee
where $\mathcal{F}_1$ and $\mathcal{F}_2$ are two copies of $\mathcal{F}$.

On the two-component femionic Fock space $\mathcal{F}_1 \otimes \mathcal{F}_2$,
define for $i=1, 2$ operators $\psi^i_r$ and $\psi^{i*}_r$, $r \in {\mathbb Z} + \half$.
They act on the $i$-th factor of the tensor product as the operators $\psi_r$ and$\psi_r^*$ respectively,
and we use the Koszul sign convention for the anti-commutation relations of these operators, i.e., we set
\be\label{eqn:sign convention}[\psi^i_r , \psi^j_s]_+=[\psi^i_r ,\psi^{j*}_s]_+=[\psi^{i*}_r , \psi^{j*}_s]_+ =0\ee
for $i\neq j$ and $r , s \in {\mathbb Z} + \half$.

Note that the charge 0 subspace $(\mathcal{F}_1\otimes \mathcal{F}_2)^{(0)}$ of $\mathcal{F}_1\otimes \mathcal{F}_2$ has a natural
decomposition  as
\be
(\mathcal{F}_1\otimes \mathcal{F}_2)^{(0)} = \bigoplus_{n\in \mathbb{Z}}(\mathcal{F}_1^{(n)}\otimes \mathcal{F}_2^{(-n)})
\ee
For an element $W \in (\mathcal{F}_1\otimes \mathcal{F}_2)^{(0)}$, we denote  by $W^0$ the projection of $W$ to the component
 $\mathcal{F}_1^{(0)}\otimes \mathcal{F}_2^{(0)}$ with respect to this decomposition.

The main result proved in \cite{DZ} is the following
\begin{thm}\label{thm:fermion rep. of top. vertex}
For $m , n \geq 0$ and $i , j = 1 , 2$, there exist coefficients $A_{mn}=A_{mn}(a)$ and $A_{mn}^{ij}=A^{ij}_{mn}(a_1 ,a_2)$ such that
\be
\begin{split}
&V^{(a)}_1 = \exp(\sum_{m , n \geq 0} A_{mn}(a)\psi_\bm\psi^{*}_{-\bn})|0\rangle\\
&V^{(a_1 , a_2)}_2 = \left(\exp(\sum_{i,j=1,2}\sum_{m , n \geq 0}A^{ij}_{mn}(a_1 ,a_2)\psi^i_\bm\psi^{j*}_{-\bn})|0_{12}\rangle\huge\right)^0
\end{split}
\ee
where $|0_{12}\rangle = |0_{1}\rangle\otimes|0_{2}\rangle$ is the vacuum vector in $\mathcal{F}_1\otimes \mathcal{F}_2 $.
\end{thm}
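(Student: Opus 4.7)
\emph{Strategy.} The plan is to rewrite the bosonic gluing formula \eqref{eq: boson gluing} as a partial fermionic contraction between $V_2^{(a,0)}$ and a transformed version of $V_1^{(0)}$, both of which are Bogoliubov transforms by Theorem \ref{thm:fermion rep. of top. vertex}, and then invoke (as a black box) the main result of \S 4: the gluing of an arbitrary two-component Bogoliubov transform against an arbitrary one-component Bogoliubov transform is again a Bogoliubov transform. The tau-function statement will then follow from Proposition \ref{bilinear relation tau fermion} and Remark \ref{rem:Bogoliubov-KP}.

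\emph{Step 1: fermionic encoding of the gluing.} I first introduce two operators on $\mathcal{F}$ that capture the bosonic ingredients of \eqref{eq: boson gluing}: the energy operator $L_0$ defined by $L_0|\nu\rangle = |\nu|\cdot|\nu\rangle$, which realizes the edge weight $Q^{|\nu|}$ via $Q^{L_0}$; and the transpose involution $\Omega$, the fermionic avatar of the $\omega$-involution on $\Lambda$, characterized by $\Omega|\mu\rangle = |\mu^t\rangle$. Let $\pi_W : \mathcal{F}_1 \otimes \mathcal{F}_2 \to \mathcal{F}_1$ denote the partial pairing $\pi_W(u \otimes v) = (W,v)_{\mathcal{F}_2}\cdot u$ obtained by contracting the second factor against $|W\rangle\in\mathcal{F}_2$. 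Using $C^1_{\nu^t} = (V_1^{(0)},|\nu^t\rangle) = (V_1^{(0)},\Omega|\nu\rangle)$ together with $Q^{|\nu|}(\cdot,|\nu\rangle) = (Q^{L_0}\cdot,|\nu\rangle)$, a direct rewriting of \eqref{eq: boson gluing} yields the fermionic gluing identity
$$Z_0\cdot V \;=\; \pi_{W}\!\bigl(V_2^{(a,0)}\bigr), \qquad |W\rangle := Q^{L_0}\,\Omega\, V_1^{(0)}.$$

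\emph{Step 2: assembling the pieces.} Both $Q^{L_0}$ and $\Omega$ fix the vacuum and act on the Clifford algebra by automorphisms preserving the span of normal-ordered bilinears $:\!\psi_r\psi^*_s\!:$ ($L_0$ is itself such a bilinear, while for $\Omega$ one verifies $\Omega\,:\!\psi_r\psi^*_s\!:\Omega^{-1} = -:\!\psi_{-s}\psi^*_{-r}\!:$), so their conjugation action preserves the orbit $\widehat{GL_\infty}|0\rangle$. Hence $|W\rangle$ is a one-component Bogoliubov transform, and since $V_2^{(a,0)}$ is a two-component Bogoliubov transform by Theorem \ref{thm:fermion rep. of top. vertex}, the \S 4 gluing theorem applies and shows that $\pi_W(V_2^{(a,0)}) \in \widehat{GL_\infty}|0\rangle$. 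The normalization $Z_0$ has constant term $1$ as a formal series in $Q$ (using $s_{\emptyset/\eta}=\delta_{\eta,\emptyset}$), so it is invertible and $V$ itself lies in $\widehat{GL_\infty}|0\rangle$; rewritten in the canonical normal-ordered exponential form this gives \eqref{eq:fermion rep} with coefficients $R_{mn}$ that are formal power series in $Q$. Finally, Proposition \ref{bilinear relation tau fermion} and Remark \ref{rem:Bogoliubov-KP} identify the corresponding $Z^a(\bx)$ as a tau-function of the KP hierarchy in the Miwa variables $t_n = p_n(\bx)/n$.

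\emph{Main obstacle.} Everything outside \S 4 is formal bookkeeping once the operators $\Omega$ and $Q^{L_0}$ are introduced. The substantive content is the \S 4 gluing theorem itself, where the contracted fermionic indices of $V_2^{(a,0)}$ must be decoupled from the surviving ones while tracking the Koszul signs \eqref{eqn:sign convention} and the charge decomposition of $\mathcal{F}_1\otimes\mathcal{F}_2$; this is where the non-formal work of the paper takes place.
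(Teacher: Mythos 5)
Your proposal does not prove the statement it was assigned; it proves a different theorem and uses the assigned statement as a hypothesis. The statement in question is Theorem \ref{thm:fermion rep. of top. vertex}: that the framed one-legged vertex state $V_1^{(a)}=\sum_\mu q^{a\kappa_\mu/2}s_\mu(q^\rho)|\mu\rangle$ and the framed two-legged vertex state $V_2^{(a_1,a_2)}$ are themselves Bogoliubov transforms of the vacuum, with specific coefficients $A_{mn}(a)$ and $A^{ij}_{mn}(a_1,a_2)$. Your argument instead establishes Theorem \ref{thm:fernion rep} (that the glued state $V$ for the resolved conifold is a Bogoliubov transform), and in Step 2 you explicitly write that $V_2^{(a,0)}$ and $V_1^{(0)}$ ``are Bogoliubov transforms by Theorem \ref{thm:fermion rep. of top. vertex}.'' Relative to the assigned target this is circular: the entire content of the theorem to be proved is assumed, and no argument for it appears anywhere in the proposal.

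A proof of the actual statement is of a different character. Expanding $\exp(\sum_{m,n\geq 0}A_{mn}\psi_{m+\frac12}\psi^*_{-n-\frac12})|0\rangle$ and using \eqref{eq:operator rep for mu} together with the anticommutation relations \eqref{eqn:CR} reduces the one-legged claim to the Giambelli-type determinantal identity $q^{a\kappa_\mu/2}s_\mu(q^\rho)=\det\bigl(A_{m_in_j}(a)\bigr)_{1\leq i,j\leq k}$ for $\mu=(m_1,\dots,m_k\,|\,n_1,\dots,n_k)$, and the two-legged claim to an analogous $2k\times 2k$ determinantal identity for $q^{((a_1+1)\kappa_\mu+a_2\kappa_\nu)/2}\sum_\eta s_{\mu^t/\eta}(q^\rho)s_{\nu/\eta}(q^\rho)$; these are the framed ADKMV identities and constitute real Schur-calculus work. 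Note that the present paper does not reprove them either: it imports Theorem \ref{thm:fermion rep. of top. vertex} wholesale from \cite{DZ}, so if citation of \cite{DZ} is allowed the correct ``proof'' here is a citation, and otherwise the determinantal identities above must be supplied. (As a side remark, what you did write is essentially the paper's own route to Theorem \ref{thm:fernion rep}: your state $Q^{L_0}\Omega V_1^{(0)}$ is the state $V_1'$ of Lemma \ref{lem:fermion for V'}, and your black-boxed \S 4 result is Theorem \ref{thm:gluing}; so as an outline of that other theorem's proof it is reasonable, but it is not a proof of the statement under review.)
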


The coefficients  $A_{mn}^{ij}$ and $A_{mn}$ in Theorem \ref{thm:fermion rep. of top. vertex} can be given explicitly (see \cite{ADKMV}\cite{DZ}):
\begin{equation*}
\begin{split}
&A_{mn}(a) = (-1)^n q^{(2a+1)(m(m+1)-n(n+1))/4}\frac{1}{[m+n+1][m]![n]!}, \\
&A^{11}_{mn}(a_1 ,a_2)=(-1)^n q^{\frac{(2a_1+1)m(m+1)-(2a_{2}+1)n(n+1)}{4}+\frac{1}{6}}\sum_{l=0}^{\min(m , n)}
 \frac{q^{\frac{1}{2}(l+1)(m+n-l)}}{[m-l]![n-l]!}, \\
&A^{22}_{mn}(a_1 ,a_2)=(-1)^n q^{\frac{(2a_2+1)m(m+1)-(2a_{1}+1)n(n+1)}{4}+\frac{1}{6}}\sum_{l=0}^{\min(m , n)}
 \frac{q^{\frac{1}{2}(l+1)(m+n-l)}}{[m-l]![n-l]!}, \\
&A^{12}_{mn}(a_1 ,a_2)=(-1)^n q^{\frac{(2a_1+1)m(m+1)-(2a_{2}+1)n(n+1)}{4}+\frac{1}{6}}\sum_{l=0}^{\min(m , n)}
 \frac{q^{\frac{1}{2}(l+1)(m+n-l)}}{[m-l]![n-l]!},\\
&A^{21}_{mn}(a_1 ,a_2)=-(-1)^n q^{\frac{(2a_2+1)m(m+1)-(2a_{1}+1)n(n+1)}{4}-\frac{1}{6}}\sum_{l=0}^{\min(m , n)}
 \frac{q^{-\frac{1}{2}(l+1)(m+n-l)}}{[m-l]![n-l]!}.
\end{split}
\end{equation*}

Let $\tilde{V} = Z_0 V \in \mathcal{F}$, define
\be V'_1 = \sum_\mu |Q|^{|\mu|}s_{\mu^t}(q^\rho)|\mu\rangle
\ee
 as a formal power series of $Q$
with coefficients in $\mathcal{F}$. We view $\tilde{V}$ as an element in  $\mathcal{F}_1$ and $V'_1$ an
element in $\mathcal{F}_2$. By the definition of the inner product on $\mathcal{F}$ and \eqref{eq: boson gluing}, it is clear that
\be\label{eq:inner product rep}
\tilde{V} = (V^{(a ,0)}_2 , V'_1)
\ee
where the inner product is taken on the $\mathcal{F}_2$ component.

The following lemma, which shows that $V'_1$ is also a Bogoliubov transform of the fermionic vacuum, will be used in our proof of
Theorem \ref{thm:fernion rep}.

\begin{lem}
The element $V'_1$ is a Bogoliubov transform of the fermionic vacuum, i.e, it can be represented as
\be
V'_1 = \exp(\sum_{m , n \geq 0} A'_{mn}\psi_\bm\psi^{*}_{-\bn})|0\rangle,
\ee
where the coefficients
\be
A'_{mn} = Q^{m+n+1}q^{\frac{1}{2}(n(n+1) - m(m+1))}A_{mn}(0).
\ee
\begin{proof}\label{lem:fermion for V'}
Let $\mu = (m_1, \cdots , m_k | n_1 , \cdots , n_k)$ be a partition in Frobenius notation.
Then $\kappa_{\mu^t} = -\kappa_\mu$ by Lemma \ref{lm:kappa}, and hence $s_{\mu^t}(q^\rho) = q^{-\frac{\kappa_\mu}{2}}s_{\mu}(q^\rho)$
by Proposition \ref{prop:SchurSpec}.
By \eqref{eqn:CR}, \eqref{eq:operator rep for mu} and Theorem \ref{thm:fermion rep. of top. vertex},
one can show that $s_{\mu}(q^\rho) = det(A_{m_in_j}(0))_{k\times k}$. Note that $|\mu| = \sum_{i=1}^k (m_i+n_i+1)$. If we take
$A'_{mn} = Q^{m+n+1}q^{\frac{1}{2}(n(n+1)-m(m+1))}A_{mn}(0)$, then we have $Q^{|\mu|}s_{\mu^t}(q^\rho) = det(A'_{m_in_j})_{k\times k}$, and hence
$V'_1 = \exp(\sum_{m , n \geq 0} A'_{mn}\psi_\bm\psi^{*}_{-\bn})|0\rangle$.
\end{proof}
\end{lem}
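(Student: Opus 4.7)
The plan is to reduce everything to the $a=0$ case of Theorem \ref{thm:fermion rep. of top. vertex} by exploiting the transposition identity for Schur functions at $q^\rho$, together with the determinantal expansion of a fermionic Bogoliubov transform. First I would recall the general fact (a direct consequence of Wick's theorem, or equivalently of expanding the exponential and applying the anticommutation relations \eqref{eqn:CR} to the creator expression \eqref{eq:operator rep for mu}) that for any coefficients $B_{mn}$,
\begin{equation*}
\exp\Bigl(\sum_{m,n\geq 0}B_{mn}\psi_\bm\psi^{*}_{-\bn}\Bigr)|0\rangle=\sum_{\mu}(-1)^{n_1+\cdots+n_k}\det(B_{m_i n_j})_{1\leq i,j\leq k}\,|\mu\rangle,
\end{equation*}
where $\mu=(m_1,\dots,m_k\,|\,n_1,\dots,n_k)$ in Frobenius notation. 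Applying this to the $a=0$ statement of Theorem \ref{thm:fermion rep. of top. vertex}, and matching coefficients of $|\mu\rangle$, identifies $s_\mu(q^\rho)$ (up to the sign $(-1)^{n_1+\cdots+n_k}$) with $\det(A_{m_i n_j}(0))_{k\times k}$.

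Next I would rewrite the target coefficient $Q^{|\mu|}s_{\mu^t}(q^\rho)$ of $|\mu\rangle$ in $V'_1$. By Proposition \ref{prop:SchurSpec} and Lemma \ref{lm:kappa}, one has $s_{\mu^t}(q^\rho)=q^{-\kappa_\mu/4}\prod_{e\in\mu}[h(e)]^{-1}\cdot q^{-\kappa_\mu/4}s_\mu(q^\rho)/s_\mu(q^\rho)\cdot s_\mu(q^\rho)$; more cleanly, $\kappa_{\mu^t}=-\kappa_\mu$ gives $s_{\mu^t}(q^\rho)=q^{-\kappa_\mu/2}s_\mu(q^\rho)$. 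Using the Frobenius-form decomposition $\kappa_\mu=\sum_i\bigl(m_i(m_i+1)-n_i(n_i+1)\bigr)$ from Lemma \ref{lm:kappa}, together with $|\mu|=\sum_i(m_i+n_i+1)$, one gets
\begin{equation*}
Q^{|\mu|}s_{\mu^t}(q^\rho)=\prod_{i=1}^k Q^{m_i+n_i+1}q^{(n_i(n_i+1)-m_i(m_i+1))/2}\cdot s_\mu(q^\rho).
\end{equation*}

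The key step is then to absorb the product $\prod_i Q^{m_i+n_i+1}q^{(n_i(n_i+1)-m_i(m_i+1))/2}$ into the rows and columns of the determinant $\det(A_{m_i n_j}(0))$. Since the exponent factors as $Q^{m_i}q^{-m_i(m_i+1)/2}\cdot Q^{n_j+1}q^{n_j(n_j+1)/2}$ when we split $m_i+n_i+1=m_i+(n_i+1)$ (taking the $j$-th factor from column $j$), a row-by-row and column-by-column multilinearity argument shows that the coefficient of $|\mu\rangle$ is exactly $(-1)^{n_1+\cdots+n_k}\det(A'_{m_i n_j})$ with
\begin{equation*}
A'_{mn}=Q^{m+n+1}q^{(n(n+1)-m(m+1))/2}A_{mn}(0).
\end{equation*}
The general determinantal expansion recalled in the first paragraph then identifies $V'_1$ with $\exp\bigl(\sum_{m,n\geq 0}A'_{mn}\psi_\bm\psi^{*}_{-\bn}\bigr)|0\rangle$, completing the proof.

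The main obstacle is not conceptual but bookkeeping: one has to verify that the sign $(-1)^{n_1+\cdots+n_k}$ coming from \eqref{eq:operator rep for mu} is the same on both sides (which it is, because the Frobenius indices $m_i,n_i$ attached to $\mu$ and $\mu^t$ only swap roles) and that the row/column factorization of the prefactor indeed pulls out of the determinant cleanly. Once the prefactor is written in the split form $Q^{m_i}q^{-m_i(m_i+1)/2}\cdot Q^{n_j+1}q^{n_j(n_j+1)/2}$ this is immediate by multilinearity of the determinant, so the argument should go through with only routine verification.
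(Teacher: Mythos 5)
Your proposal is correct and takes essentially the same route as the paper's own proof: transpose the Schur function via $\kappa_{\mu^t}=-\kappa_\mu$, read off $s_\mu(q^\rho)=\det(A_{m_in_j}(0))$ from the $a=0$ case of Theorem \ref{thm:fermion rep. of top. vertex}, and absorb $Q^{|\mu|}q^{-\kappa_\mu/2}$ into the determinant entries by row/column multilinearity. You merely spell out the Wick-type determinantal expansion and the sign bookkeeping that the paper leaves implicit.
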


By \eqref{eq:inner product rep},
Theorem \ref{thm:fermion rep. of top. vertex} and Lemma \ref{lem:fermion for V'},
it is clear that Theorem \ref{thm:fernion rep} is a direct corollary
of Theorem \ref{thm:gluing} that we will prove in \S \ref{sec:special fermion gluing}.

Provided Theorem \ref{thm:fernion rep}, it is easy to determine the coefficients $R_{mn}$ appearing in \eqref{eq:fermion rep}. They are given by
\be
R_{mn} = \sum_{\nu}C^2_{(m|n)\nu}(a)Q^{|\nu|}s_{\nu^t}(q^\rho)/Z_0.
\ee
But it is not our aim here to study various simple forms of $R_{mn}$.

\section{Gluing of  Bogoliubov transforms}\label{sec:special fermion gluing}
We have mentioned the notion of (one-component) Bogoliubov transform in the previous section.
Recall that a vector $V$ in the 2-fold fermionic Fock space $\cF_1\otimes\cF_2$ is called a Bogoliubov transform (of the fermionic vacumm) if it is given by the vacuum in $\cF_1\otimes\cdots\otimes \cF_n$  acted upon by an exponential of a quadratic expression of fermionic creators. In other word, it can be represented as
\be
V = \exp(\sum_{i,j = 1}^2\sum_{m , n \geq 0}A^{ij}_{mn}\psi^i_\bm\psi^{j*}_{-\bn})|0\rangle
\ee
where $|0\rangle$ is the vacuum in $\cF_1\otimes \cF_2$ and $A^{ij}_{mn}$ are certain coefficients possibly with parameters.

In this section, we study properties of  fermionic states which are constructed by gluing (to be defined later) one-component and two-component Bogoliubov transforms. If we glue an arbitrary one-component Bogoliubov transform and an arbitrary two-component
Bogoliubov transform, then we get a state in the fermionic Fock space $\cF$. Our aim here is to prove that
the state we get is also a Bogoliubov transform of the fermionic vacuum. In particular, it is a tau function of the KP hierarchy.

Let
\begin{equation}
V_1 = \exp(\sum_{i,j = 1,2}\sum_{m , n \geq 0}A^{ij}_{mn}\psi^i_\bm\psi^{j*}_{-\bn})|0_{12}\rangle
\end{equation}
be a two-component Bogoliubov transform in $\mathcal{F}_1\otimes \mathcal{F}_2$ and
\begin{equation}
V_2 = \exp(\sum_{m , n \geq 0}A_{mn}\psi^2_\bm\psi^{2*}_{-\bn})|0_{2}\rangle
\end{equation}
be an one-component Bogoliubov transform in $\mathcal{F}_2$, where $A^{ij}_{mn}$ and $A_{mn}$ are arbitrary coefficients maybe with parameters.

Define
\begin{equation}
\tilde{V_2} = \exp(\sum_{m , n \geq 0}Q^{m+n+1}A_{mn}\psi^2_\bm\psi^{2*}_{-\bn})|0_{2}\rangle
\end{equation}
where $Q$ is a formal variable and $\tilde{V_2}$ is viewed as a formal power series of $Q$ with coefficients in $\mathcal{F}_2$.

 The following inner product
\begin{equation}
\begin{split}
V_0 &= \left(\exp(\sum_{m , n \geq 0}A^{22}_{mn}\psi^2_\bm\psi^{2*}_{-\bn})|0_{2}\rangle ,
 \exp(\sum_{m , n \geq 0}Q^{m+n+1}A_{mn}\psi^2_\bm\psi^{2*}_{-\bn})|0_{2}\rangle\right)\\
    &=\langle 0_2|\exp(\sum_{m , n \geq 0}Q^{m+n+1}A_{mn}\psi^2_{-\bn}\psi^{2*}_{\bm})
     \exp(\sum_{m , n \geq 0}A_{mn}^{22}\psi^2_\bm\psi^{2*}_{-\bn})|0_{2}\rangle
\end{split}
\end{equation}
is well defined as a formal power series of the formal variable $Q$.

Define $\tilde{V} = (V_1 , \tilde{V_2})$,
where the inner product is taken on the $\mathcal{F}_2$ component. Then $\tilde{V}$ is  a formal power series of $Q$ with coefficients in $\mathcal{F}_1$.

\begin{thm}\label{thm:gluing}
Let $\tilde{V}$ and $V_0$ be as above,
then the formal  power series $V = \tilde{V}/V_0$ of $Q$ with coefficients in $\mathcal{F}_1$
is a Bogoliubov transform of the fermionic vacuum $|0_1\rangle \in \mathcal{F}_1$,
 i.e., for  $m , n \geq 0$, there exist  formal power series $R_{mn}$ of $Q$, such that
 \be
 V = \exp(\sum_{m,n\geq 0}R_{mn}\psi^1_\bm\psi^{1*}_{-\bn})|0_{1}\rangle.
 \ee
\end{thm}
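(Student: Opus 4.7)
The plan is to show that $V$ satisfies the KP Hirota bilinear relation
\begin{equation*}
\sum_{r\in\mathbb{Z}+\frac{1}{2}} \psi^1_r V \otimes \psi^{1*}_r V = 0,
\end{equation*}
whereupon Remark~\ref{rem:Bogoliubov-KP} (combined with the standard fact that any element of the $\widehat{GL_\infty}$-orbit of $|0_1\rangle$ can be written using creator-bilinears only) yields $V = \exp(\sum R_{mn}\psi^1_\bm\psi^{1*}_{-\bn})|0_1\rangle$. Well-definedness of $V = \tilde V/V_0 \in \cF_1^{(0)}[[Q]]$ is straightforward: in the charge decomposition $V_1 = \bigoplus_n V_1^{(n)}$ with $V_1^{(n)} \in \cF_1^{(n)}\otimes \cF_2^{(-n)}$, only $V_1^{(0)}$ pairs nontrivially with $\tilde V_2 \in \cF_2^{(0)}$, and $\tilde V_2|_{Q=0} = |0_2\rangle$ implies $V_0 = 1 + O(Q)$ is invertible in $\mathbb{C}[[Q]]$.

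The algebraic heart is a two-component bilinear relation for $V_1$:
\begin{equation}\label{plan:2comp}
\sum_{r\in\mathbb{Z}+\frac{1}{2}}\bigl(\psi^1_r V_1 \otimes \psi^{1*}_r V_1 + \psi^2_r V_1 \otimes \psi^{2*}_r V_1 \bigr) = 0 \quad \text{in } (\cF_1\otimes\cF_2)^{\otimes 2}.
\end{equation}
This is the Hirota relation for the two-component $\widehat{GL_\infty^{(2)}}$-orbit of the vacuum, cf.~\cite{KL}. I would prove it by showing that $S := \sum_{i,r}\psi^i_r\otimes\psi^{i*}_r$ annihilates $|0_{12}\rangle\otimes|0_{12}\rangle$ (immediate, since one of $\psi^i_r,\psi^{i*}_r$ kills its vacuum for every $(i,r)$) and commutes with every coproduct $\psi^i_\bm\psi^{j*}_{-\bn}\otimes 1 + 1\otimes\psi^i_\bm\psi^{j*}_{-\bn}$ --- a direct case-by-case commutator using \eqref{eqn:CR} and \eqref{eqn:sign convention} in which the contributions from the two tensor slots cancel. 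Since the creator-bilinears $\psi^i_\bm\psi^{j*}_{-\bn}$ pairwise commute (products of mutually anti-commuting creators), $S$ commutes with $\Delta(M) := M\otimes 1 + 1\otimes M$ for $M = \sum A^{ij}_{mn}\psi^i_\bm\psi^{j*}_{-\bn}$, and applying $S$ to $V_1\otimes V_1 = e^{\Delta(M)}|0_{12}\rangle^{\otimes 2}$ yields~\eqref{plan:2comp}. The analogous one-component argument gives $\sum_r\psi^2_r\tilde V_2\otimes\psi^{2*}_r\tilde V_2 = 0$, equivalent to its tensor-swap $\sum_r\psi^{2*}_r\tilde V_2\otimes\psi^2_r\tilde V_2 = 0$.

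To finish, I would apply the partial pairing $(\,\cdot\,,\tilde V_2)_{\cF_2}$ to each tensor factor of~\eqref{plan:2comp}. Since $\psi^1_r$ acts only on $\cF_1$, the $\psi^1$-sum contributes $V_0^2\sum_r \psi^1_r V\otimes\psi^{1*}_r V$. For the $\psi^2$-sum, the adjoint relation $(\psi^2_r)^* = \psi^{2*}_r$ on $\cF_2$ together with the Koszul sign incurred in moving $\psi^2_r$ past the $\cF_1$-factor of $V_1^{(n)}$ yields
\begin{equation*}
(\psi^2_r V_1^{(n)},\tilde V_2)_{\cF_2} = (-1)^n (V_1^{(n)},\psi^{2*}_r\tilde V_2)_{\cF_2};
\end{equation*}
only $V_1^{(\pm 1)}$ contribute (forced by the charge matching $\tilde V_2\in\cF_2^{(0)}$), both with sign $-1$, and the two signs multiply to $+1$ in the tensor product. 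Consequently,
\begin{equation*}
\sum_r (\psi^2_r V_1,\tilde V_2)_{\cF_2}\otimes(\psi^{2*}_r V_1,\tilde V_2)_{\cF_2} = (\Phi\otimes\Phi)\!\left(\sum_r\psi^{2*}_r\tilde V_2\otimes\psi^2_r\tilde V_2\right) = 0,
\end{equation*}
where $\Phi(v) := (V_1,v)_{\cF_2}:\cF_2\to\cF_1$ and the final vanishing is the swap of the one-component bilinear relation. Dividing by $V_0^2$ gives the desired KP bilinear relation for $V$.

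The main technical burden is the commutator check $[S,\Delta(\psi^i_\bm\psi^{j*}_{-\bn})]=0$ underlying~\eqref{plan:2comp}, together with the Koszul-sign bookkeeping in the adjoint step. Both are direct but delicate; I expect the signs to cancel systematically because the charge constraint imposed by $\tilde V_2\in\cF_2^{(0)}$ couples the $\cF_1$- and $\cF_2$-parities of the contributing $V_1^{(n)}$'s in a symmetric way across the two tensor slots.
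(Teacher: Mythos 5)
Your proposal is essentially correct, but it follows a genuinely different route from the paper. The paper's proof is a direct computation: writing $\tilde V = \exp(\cA^{11})\langle 0_2|\exp(\cA^*)\exp(\cA^{21})\exp(\cA^{12})\exp(\cA^{22})|0_2\rangle|0_1\rangle$, it repeatedly applies the Baker--Campbell--Hausdorff identity $e^Ae^B = e^{[A,B]}e^Be^A$ (valid when $[A,B]$ is central) to push the $\cF_2$-annihilators in $\cA^*$ to the right; each pass produces quadratic $\cF_1$-creator corrections ($\cA^{2112,j}$, $\cA^{1221,j}$) divisible by higher and higher powers of $Q$, so the process converges $Q$-adically to an explicit exponential of creator bilinears times the scalar $V_0=\langle 0_2|\exp(\cA^*)\exp(\cA^{22})|0_2\rangle$ --- this also explains where $V_0$ comes from. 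Your argument instead deduces the KP bilinear relation for $V$ from the two-component bilinear relation for $V_1$ and the one-component one for $\tilde V_2$ via the partial pairing; your sign analysis in the adjoint step is right (only the charge-$(\pm1,\mp1)$ components of $V_1$ contribute, each with a factor $-1$, and the two cancel), and the commutator check $[S,\Delta(M)]=0$ is the standard multi-component Hirota argument. What your route buys is conceptual clarity and generality; what it costs is explicitness (the $R_{mn}$ are only recovered as Pl\"ucker coordinates) plus one extra ingredient you state too loosely: an element of the $\widehat{GL_\infty}$-orbit can be written with creator bilinears only \emph{provided} its vacuum coefficient is invertible, so you must check $\langle 0_1|V\rangle$ is a unit in $\mathbb{C}[[Q]]$. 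This does hold --- pairing $V_1$ against $\langle 0_1|$ kills every term containing $\cA^{11},\cA^{12},\cA^{21}$, so $\langle 0_1|\tilde V\rangle = (\exp(\cA^{22})|0_2\rangle,\tilde V_2)=V_0$ and hence $\langle 0_1|V\rangle=1$ --- but it needs to be said, and the passage from the bilinear relation over the ring $\mathbb{C}[[Q]]$ to the Giambelli/determinantal (creator-only) form should be justified via the Pl\"ucker relations rather than by citing the orbit statement of Remark \ref{rem:Bogoliubov-KP} verbatim, since that remark is phrased over a field.
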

From the proof of this theorem, we will see why $V_0$ appear naturally as a factor of $\tilde{V}$.
The following lemma, which is well-known in Lie theory,
will be used in the proof of Theorem \ref{thm:gluing}.

\begin{lem} \label{lem:exponential cr}
(See e.g \cite{Hall}) Let $A$ and $B$ be two linear operators on a
vector space $H$. Assume both $e^A$ and $e^B$ make sense. If the
commutator $[A , B] = AB - BA$ commutes with both $A$ and $B$. Then
\be e^A e^B = e^{[A , B]}e^B e^A. \ee
\end{lem}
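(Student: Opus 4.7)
The plan is to reduce the identity to the Hadamard formula for the adjoint action, $e^A X e^{-A} = \sum_{k \geq 0} \tfrac{1}{k!} \mathrm{ad}_A^k(X)$, where $\mathrm{ad}_A(X) := [A,X]$. Under the hypothesis that $C := [A,B]$ commutes with $A$, the iterated commutators vanish beyond first order, $\mathrm{ad}_A^2(B) = [A,C] = 0$, so the Hadamard series truncates to $e^A B e^{-A} = B + C$. This first step isolates the role of the assumption $[A,[A,B]] = 0$ and is essentially the whole content of the lemma at the linear level.

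Next I would promote this from $B$ to $e^B$. Inserting $e^{-A} e^A$ between each pair of consecutive $B$-factors in the power series for $e^B$ gives $e^A B^n e^{-A} = (e^A B e^{-A})^n = (B+C)^n$ for every $n$, whence $e^A e^B e^{-A} = \exp(B+C)$. The same conclusion can also be reached by differentiating $G(t) := e^{tA} e^B e^{-tA}$, observing $G'(t) = [A, G(t)]$, and comparing with the ODE satisfied by $\exp(B+tC)$.

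The final step uses the second half of the hypothesis, that $C$ also commutes with $B$. Since $B$ and $C$ then commute, the standard additive-exponential rule applies and $e^{B+C} = e^C e^B$. Combining these gives $e^A e^B e^{-A} = e^C e^B = e^{[A,B]} e^B$, and right-multiplying by $e^A$ yields the stated identity $e^A e^B = e^{[A,B]} e^B e^A$.

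The main obstacle is not algebraic but contextual: one must check that the formal manipulations of exponentials are legitimate in the ambient algebra to which the lemma will be applied. In the Section~4 applications the operators $A$ and $B$ are infinite linear combinations of normally-ordered quadratic fermionic monomials and $e^A$, $e^B$ are understood as formal power series in the gluing parameter $Q$, so every identity above holds term-by-term and no convergence question arises. The centrality hypothesis $[A,C] = 0 = [B,C]$ is preserved under all the rearrangements above by bilinearity of the commutator, so no additional verification is required beyond ensuring it holds at the start.
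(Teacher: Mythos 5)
Your proof is correct. Note, however, that the paper does not actually prove this lemma: it is stated with a bare citation to Hall's textbook and used as a black box, so there is nothing in the paper to compare against. Your argument --- truncating the Hadamard series $e^A B e^{-A}=\sum_k \frac{1}{k!}\mathrm{ad}_A^k(B)$ to $B+[A,B]$ using $[A,[A,B]]=0$, conjugating the exponential to get $e^A e^B e^{-A}=e^{B+[A,B]}$, and then splitting $e^{B+[A,B]}=e^{[A,B]}e^B$ using $[B,[A,B]]=0$ --- is the standard textbook derivation, and your remark that in the intended application everything is a formal power series in $Q$ (so no convergence issues arise) correctly addresses the only point the lemma's loose statement leaves open.
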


\begin{proof}
(Proof of Theorem \ref{thm:gluing}) Recall that
\begin{equation*}
\tilde{V}=(\exp(\sum_{i,j = 1,2}\sum_{m , n \geq 0}A^{ij}_{mn}\psi^i_\bm\psi^{j*}_{-\bn})|0_{12}\rangle, \exp(\sum_{m , n \geq 0}Q^{m+n+1}A_{mn}\psi^2_\bm\psi^{2*}_{-\bn})|0_{2}\rangle).
\end{equation*}
To make the notations simpler,
let
\bea
&& \cA^{ij} = \sum_{m , n \geq 0}A^{ij}_{mn}\psi^i_{\bm}\psi^{j*}_{-\bn}, \\
&& \cA = \sum_{m , n \geq 0}Q^{m+n+1}A_{mn} \psi^2_{\bm}\psi^{2*}_{-\bn}.
\eea
Then one can rewrite $\tilde{V}$ as follows:
\be
\tilde{V} = \exp{\cA^{11}} \langle0_2|\exp(\cA^*) \exp (\cA^{21}) \exp (\cA^{12}) \exp (\cA^{22})
|0_2\rangle |0_1\rangle
\ee
As usual,
our strategy here is to move the annihilators to the right using the
anticommutation relations \eqref{eqn:CR}.
By \eqref{eqn:CR} and \eqref{eqn:sign convention}, one has
\be
[\cA^*, \cA^{21}]
%% = \left[\sum_{m , n \geq 0}Q^{m+n+1}A_{mn}\psi^2_{-\bn}\psi^{2*}_{\bm},
%% \sum_{m , n \geq 0}A^{21}_{mn}\psi^{2}_{\bm}\psi^{1*}_{-\bn} ,  \right]\\
= \cB^{21}=\sum_{m, n \geq 0} B^{21}_{mn} \psi^{2}_{-\bm}\psi^{1*}_{-\bn},
\ee
where
\be
B^{21}_{mn} = \sum_{r \geq 0}Q^{r+m+1}A_{rm}A^{21}_{rn}
\ee
are formal power series of $Q$ which are divisible by $Q$.
Note that the right-hand side  commutes with both
$\cA^*$ and
$\cA^{21}$,
hence by Lemma \ref{lem:exponential cr} we get
\ben
&& \exp(\cA^*) \exp(\cA^{21})
= \exp(\cA^{21}) \exp(\cA^*) \exp(\cB^{21}).
\een
By the same method, one can show that
\be
\exp(\cB^{21})\exp(\cA^{12})
=  \exp(\cA^{2112,1}) \exp(\cA^{12}) \exp(\cB^{21}),
\ee
where
\be
\cA^{2112,1} = [\cB^{21}, \cA^{12}]
= - \sum_{m,n \geq 0} \sum_{r\geq 0} A^{12}_{mr}B^{21}_{rn}\psi^1_m\psi^{1*}_{-n},
\ee
and
\be
\exp(\cB^{21})\exp(\cA^{22})
=  \exp(\cA^{21,1}) \exp(\cA^{22}) \exp(\cB^{21}),
\ee
where
\be
\cA^{21,1} = [\cB^{21}, \cA^{22}]
=  - \sum_{m,n \geq 0}\sum_{r\geq 0} A^{22}_{mr}B^{21}_{rn}\psi^2_m\psi^{1*}_{-n}.
\ee
Now we have
\ben
\tilde{V} = \exp(\cA^{11} + \cA^{2112,1}) \langle 0_2| \exp(\cA^{21}) \exp(\cA^*)  \exp(\cA^{12})\exp(\cA^{22}) \exp(\cA^{21,1}) \exp(\cB^{21})|0_{2}\rangle |0_{1}\rangle.
\een
Note $\cB^{21}|0_2\rangle = 0$ and $\langle 0_2| \cA^{21} = 0$,
we have
\be
\tilde{V} = \exp(\cA^{11} + \cA^{2112,1}) \langle 0_2|
\exp(\cA^*)  \exp(\cA^{12})\exp(\cA^{22}) \exp(\cA^{21,1})
|0_{2}\rangle |0_{1}\rangle.
\ee
Similarly, one can show that
\be
\exp(\cA^*) \exp(\cA^{12})
=  \exp(\cA^{12}) \exp(\cA^*)\exp(\cB^{12}),
\ee
where
\ben
\cB^{12} = [\cA^*, \cA^{12}]
= \sum_{m , n \geq 0}B^{12}_{mn}\psi^1_\bm \psi_\bn^{2*}
\een
which commutes with both $\cA^*$ and $\cA^{12}$, where
\be
B^{12}_{mn} = -\sum_{r\geq 0} A^{12}_{mr}Q^{n+r+1}A_{nr}
\ee
are formal power series of $Q$ which are divisible by $Q$.

\be
\exp(\cB^{12}) \exp(\cA^{22}) = \exp(\cA^{22}) \exp (\cA^{12,1}) \exp(\cB^{12}),
\ee
where
\be
\cA^{12,1} = [\cB^{12}, \cA^{22}]
= \sum_{m,n \geq 0} \sum_{r \geq 0} B^{12}_{mr} A^{22}_{rn} \psi_\bm^1 \psi^{2*}_{-\bn},
\ee
and
\be
\exp(\cB^{12}) \exp(\cA^{21,1})
= \exp(\cA^{1221,1})\exp(\cA^{21,1})\exp(\cB^{12}),
\ee
where
\be
\cA^{1221,1} = [\cB^{12}, \cA^{21,1}] = \sum_{m,n\geq 0}( \sum_{r\geq 0}B^{12}_{mr}A^{21,1}_{rn})\psi^1_{\bm}\psi^{1*}_{-\bn}
\ee
commutes with both $\cB^{12}$ and $\cA^{21,1}$.
Because $\langle 0_2| \cA^{12} = 0$ and $\cB^{12}|0_2\rangle=0$,
\ben
&&  \langle 0_2| \exp(\cA^*)  \exp(\cA^{12})\exp(\cA^{22}) \exp(\cA^{21,1}) |0_{2}\rangle  \\
& = &  \langle 0_2| \exp(\cA^{12})\exp(\cA^*)  \exp(\cB^{12})
\exp(\cA^{22}) \exp(\cA^{21,1})|0_{2}\rangle  \\
&  = & \langle 0_2|
\exp(\cA^*)  \exp(\cA^{22}) \exp(\cA^{12,1}) \exp(\cA^{1221,1})\exp(\cA^{21,1})\exp(\cB^{12})
|0_{2}\rangle  \\
&  = & \exp(\cA^{1221,1}) \langle 0_2|
\exp(\cA^*)  \exp(\cA^{22}) \exp(\cA^{12,1})\exp(\cA^{21,1})
|0_{2}\rangle.
\een
Because the operators $\cA^{22}, \cA^{12,1}$ and $\cA^{21,1}$ commute with each other,
we now have
\ben
\tilde{V} =
\exp(\cA^{11}+\cA^{2112,1}+\cA^{1221,1})
\langle 0_2|\exp(\cA^{*})\exp(\cA^{21,1})\exp(\cA^{12,1})\exp(\cA^{22}) |0_2\rangle|0_1\rangle.
\een
Recall $\cA^{2122,1}, \cA^{12,1}, \cA^{21,1}$ are divisible by $Q$,
and $\cA^{1221,1}$ is divisible by $Q^2$.
By repeating the above procedure $N$-times one gets:
\ben
\tilde{V} & = &
\exp(\cA^{11}+\sum_{j=1}^N (\cA^{2112,j}+\cA^{1221,j}) \\
&& \langle 0_2|\exp(\cA^{*})\exp(\cA^{21,N})\exp(\cA^{12,N})\exp(\cA^{22})) |0_2\rangle|0_1\rangle,
\een
where $\cA^{2121,j}, \cA^{12,j}, \cA^{21,j}$
and $\cA^{1221,j}$ is divisible by $Q^j$.
Therefore,
by taking $N \to \infty$,
\be
\tilde{V} = \langle 0_2|\exp(\cA^{*}) \exp(\cA^{22}) |0_2\rangle \cdot
\exp(\cA^{11}+\sum_{j=1}^\infty (\cA^{2112,j}+\cA^{1221,j}) |0_1\rangle.
\ee
This completes the proof of Theorem 4.1.
\end{proof}

\newpage
\newpage
\maketitle
%\tableofcontents

\end{document}